\numberwithin{equation}{section}
\newtheorem{theorem}{Theorem}[section]
\newtheorem{lemma}[theorem]{Lemma}
\newcommand{\be}{\begin{equation}}
\newcommand{\ee}{\end{equation}}
\newcommand\bes{\begin{eqnarray}} \newcommand\ees{\end{eqnarray}}
\newcommand{\bess}{\begin{eqnarray*}}
\newcommand{\eess}{\end{eqnarray*}}
\begin{document}

\title{Accurate gradient computations at interfaces using finite element methods}

\author{Fangfang Qin
\thanks{Jiangsu Key Laboratory for NSLSCS, School of Mathematical Sciences,
Nanjing Normal University, Nanjing 210023, China}
\and Zhaohui Wang\thanks{Department of Mathematics, North Carolina State University, Raleigh, NC 27695}
\and Zhijie Ma\thanks{China Institute of Water Resource and Hydropower Research (IWHR), Beijing, China}
\and Zhilin Li\thanks{Department of Mathematics, North Carolina State University, Raleigh, NC 27695, USA and Nanjing Normal University. zhilin@math.ncsu.edu} \footnote{Corresponding author.} }

\maketitle

\setlength{\baselineskip}{16pt}%{\setlength\arraycolsep{2pt}
\begin{abstract}
New finite element  methods  are  proposed for elliptic interface problems in one and two dimensions.
The main motivation is not only to get an accurate solution
but also an accurate first order derivative at the interface (from each side).
The key in 1D is to use the idea from \cite{wheeler1974galerkin}.
For 2D interface problems, the idea is to introduce a small tube near the interface and introduce the gradient as part of unknowns,
which is similar to a mixed finite element method, except only at  the interface.
Thus the computational cost is just slightly higher than the standard finite element method.
We present rigorous one dimensional analysis,
which show second order convergence order for both of the solution and the gradient in 1D.
For two dimensional problems,
we present  numerical results and  observe   second order convergence  for the solution,
and super-convergence for the gradient at the interface.
\end{abstract}

{\bf Keywords: }
elliptic interface problems,  gradient/flux computation, IFEM, mixed FE formulation, computational tube.

{\bf AMS Subject Classification 2000:}
65M06, 65M85.

\section{Introduction}

In this paper, we consider  the following  interface problems
\begin{equation} \label{nabla-pde ch2}
 -  \nabla \cdot (\beta(\mathbf{x})  \nabla u(\mathbf{x})) + q(\mathbf{x}) u(\mathbf{x}) = f(\mathbf{x}), \qquad \mathbf{x}\in \Omega\setminus \Gamma,
\end{equation}
in one and two dimensions.
We assume that there is a closed interface $\Gamma$ in the solution domain across
which the coefficient $\beta$ has a finite jump (discontinuity)
\begin{equation}\label{beta ch2}
\beta(\mathbf{x})  =
\begin{cases}
 \beta_1  & \mbox{if $\mathbf{x} \in \Omega_1,$}\\
 \beta_2  & \mbox{if $\mathbf{x} \in \Omega_2.$}
 \end{cases}
\end{equation}
Because of the discontinuity,
the natural jump condition should be satisfied,
that is, both of the solution and the flux should be continuous across the interface $\Gamma$
\begin{equation}\label{flux ch2}
 [u]_{\Gamma}=0, \qquad \left[\beta \frac{\partial u}{\partial n} \right]_{\Gamma} = 0,
\end{equation}
where the jump at a point $\mathbf{X}=(X,Y)$ on the interface $\Gamma$ is defined as
\begin{small}
\begin{equation*}
\left[\beta \frac{\partial u}{\partial n} \right]_{\mathbf{X}}
  = \lim_{\mathbf{x} \rightarrow\mathbf{X}, \mathbf{x} \in \Omega_2} \beta(\mathbf{x}) \frac{\partial u(\mathbf{x})}{\partial n}  - \lim_{\mathbf{x}\rightarrow \mathbf{X}, \mathbf{x}\in \Omega_1} \beta(\mathbf{x}) \frac{\partial u(\mathbf{x})}{\partial n},
\end{equation*}
\end{small}
and  $u_n = \mathbf{n} \cdot \nabla u=\frac{\partial u}{\partial n}$ is the normal derivative of solution $u(\mathbf{X})$,
and $\mathbf{n}$ is the unit normal direction pointing to $\Omega_2$ side,
see Fig.~\ref{fig:domain ch2} for an illustration.
Since a finite element discretization is used,
we assume that $f(\mathbf{x})\in L^2(\Omega_i)$,  $q(\mathbf{x})\in L^{\infty}(\Omega_i)$ excluding $\Gamma$.
For the regularity requirement of the problem,
we also assume that $\beta(\mathbf{x})\ge \beta_0 >0$ and $q(\mathbf{x})\ge 0$; $\Gamma\in C^1$.
From these assumptions,
we know that the solution $u(\mathbf{x})\in H^2(\Omega_i)$ for $i=1,2$.
\begin{figure}[!b]
 \centering
  \includegraphics[width=0.45\textwidth]{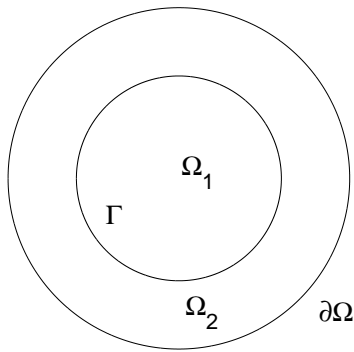}
  \caption{A diagram of domain $\Omega_1$, $\Omega_2$, an interface $\Gamma$, and the boundary $\partial\Omega$. }\label{fig:domain ch2}
\end{figure}

There are many  applications of such an interface problem, see for example,
\cite{sutton1995interfaces, zienkiewicz2000finite, li2006immersed}
and the references therein.
Many numerical methods have been developed for solving such an important problem.
For the elliptic interface problem (\ref{nabla-pde ch2})-(\ref{flux ch2}),
the solution has a low global regularity.
Thus, a direct conforming finite element method based on polynomial basis functions over a mesh likely works poorly
if the mesh  is not aligned  along the interface since the FE solution will be a smooth piece in an element
and can not capture the discontinuity in the directive at the interface.
Nevertheless,  it is reasonable to assume that the solution is piecewise smooth excluding the interface.
For example, if the coefficient is a piecewise constant in each sub-domain,
then the solution in each sub-domain is an analytic function in the interior,
but has jump in the normal derivative of the solution across the interface from the PDE limiting theory \cite{kevorkian1990partial}.
The gradient used in this paper is defined as the \emph{limiting gradient} from each side of the interface.

Naturally, finite element methods can be and have been applied to solve interface problems.
It is well known that a second order accurate approximation to the solution
of an interface problem  can be generated by the Galerkin finite element method
with the standard linear basis functions
if the triangulation is aligned with the interface,
that is, a body fitted mesh is used, see for example,
\cite{babuska1970finite, bramble1996finite, chen1998finite, xu1982error}.
Other state of art methods include the IGA-FEM,
or DPG, 
a discontinuous Petrov-Galerkin finite element method \cite{carstensena2014low}.
Some kind of posterior techniques or at least quadratic elements are needed
in order to get second order accurate gradient from each side of the interface.
The cost in the mesh generation coupled with unstructured linear solver
makes the body-fitted mesh approach less competitive.

Alternatively, we can use a structured mesh (non-body fitted) to solve such an interface problem.
There are also quite a few finite element methods using Cartesian meshes.
The immersed finite element method (IFEM) was developed for 1D and 2D interface problems
in \cite{li1998immersed} and \cite{li2003new}, respectively.
Since then, many IFEM methods and analysis have appeared in the literature,
see for example, \cite{chou2010optimal, he2010immersed},
with applications in \cite{lin2012linear, yang2003immersed}.
Often they provide second order accurate solution in $L^2$ norm but only first order accurate flux.

Nevertheless, in many applications the primary interest
is focused on flux values at interfaces
in addition to solutions of governing differential equations,
see for example, \cite{chou2012immersed, douglas1974galerkin, wheeler1974galerkin}.
Most of numerical methods for interface problems based on structure meshes
are between first and second order accurate for the solution
but the accuracy for the gradient is usually one order lower.
Note that the gradient recovering techniques for examples, \cite{wahlbin1995superconvergence, zhang2005new},
hardly work for structured meshes because of the arbitraries of the interface and the underlying mesh.
The mixed finite element approach and a few other methods
that can find accurate solutions and  gradients simultaneously
in the entire domain are often lead to saddle problems
and are computationally expensive
which are not ideal choices
if we are only interested in an accurate gradient near an interface or a boundary.
In this paper, we develop two new finite element methods, one is in 1D, the other one is 2D,
for obtaining  accurate approximations of the flux values at interfaces.

The rest of the paper is organized as follows.
In Section 2,
we explain the one dimensional algorithm and provide the theoretical analysis.
We explain how to construct approximations to the flux values at the left and the right of the interface,
and approximations to the flux values at the boundary of the domain.
The numerical algorithm for two dimensional problems is explained in Section 3 along with some numerical experiments.
We conclude and acknowledge in Section 4.

\section{One-dimensional algorithm and analysis}

The one dimensional model problem has the following form,
\begin{equation}\label{sol ch2}
\begin{split}
& -\left(\beta\left(x\right) u^{'}(x)\right)^{'}+q\left(x\right) u\left(x\right)=f\left(x\right),\\
&\qquad \qquad u\left(0\right)=0,\qquad u\left(1\right)=0,
\end{split}
\end{equation}
where $0<x<1$,
$\beta$ is a piecewise  constant and have a finite jump  at an interface $0<\alpha<1$,
and homogenous boundary condition for simplicity of the discussion.
Across the interface,  the natural jump conditions are:
\begin{equation} \label{jumpcond ch2}
 \left[ u\right]_{\alpha} =0,  \qquad  \left[\beta  u^{'}(x)\right]_{\alpha}=0.
\end{equation}

We define the standard bilinear form,
\begin{equation*}
a\left(u,v\right)=\int_{0}^{1}
\left(\beta\left(x\right)u^{'}\left(x\right)v^{'}\left(x\right)+q\left(x\right)u\left(x\right)v\left(x\right)\right) dx,\quad\forall~ u\left(x\right), v\left(x\right) \in H^{1}_{0}\left(0,1\right),
\end{equation*}
where $H^{1}_{0}\left(0,1\right)$ is the Sobolev space
\cite{brenner2007mathematical, adams2003sobolev, tartar2007introduction}.
\begin{equation*}
 H^{1}_{0}\left(0,1\right)=\{v\left(x\right) \in H^{1} \left(0,1\right)~~\mbox{and}~~v(0)=v(1)=0 \}.
\end{equation*}
The solution of the differential equation $u\left(x\right)\in H^{1}_{0}\left(0,1\right)$
is also the solution of the following variational problem:
\begin{equation}\label{varform ch2}
a\left(u,v\right)=\left(f,v\right)
=\int_{0}^{1} f\left(x\right)v\left(x\right) dx, \qquad  \forall~ v\left(x\right) \in H^{1}_{0}\left(0,1\right).
\end{equation}

Integration by parts over the separated intervals $ \left(0,\alpha\right)$ and $\left(\alpha,1\right)$ yields,
\begin{equation*}\label{}
0=\int_{0}^{\alpha}\left\{-\left(\beta u^{'}\right)^{'}+qu-f\right\}vdx+\beta_{1}u^{-}_{x}v^{-}+\int_{\alpha}^{1}\left\{-\left(\beta u^{'}\right)^{'}+qu-f\right\}vdx-\beta_{2}u^{+}_{x}v^{+}.
\end{equation*}
The superscripts $-$ and $+$ indicate the limiting value as $x$ approaches $\alpha$ from the left and right, respectively, and $u_{x}=u^{'}$. Recall that $v^{-}=v^{+}$ for any $v$ in $H^{1}_{0}$, it follows that the differential equation holds in each interval and that
\begin{equation*}\label{}
\left[u\right]=u^{+}-u^{-}=0,\quad\left[\beta u_{x}\right]=\beta^{+}u^{+}_{x}-\beta^{-}u^{-}_{x}=0,
\end{equation*}
where we have dropped the subscript $\alpha$ in the jumps for simplicity.
These relations are the same as in $\left(\ref{sol ch2}\right)$,
which indicates that the discontinuity in the coefficient $\beta\left(x\right)$
does not cause any additional difficulty for the theoretical  analysis of the finite element method.
The weak solution will satisfy the jump conditions (\ref{jumpcond ch2}).

\subsection{The immersed finite element method  in 1D}

Now we briefly explain the immersed finite element method (IFEM) in 1D introduced in \cite{li1998immersed}.
As in the IFEM, we use a uniform grid, i.e., $x_{i}=ih, i=0, 1,\cdots, n, $
and assume that $\alpha \in \left(x_{j}, x_{j+1}\right)$.
Since it is one dimensional problem,
 we use $\beta^+$ for $\beta_2$, $\beta^-$ for $\beta_1$ and so on.

If the interface does not cut an interval $(x_i,x_{i+1})$,
then we use the standard piecewise linear basis function, \emph{i.e.},
the hat function $\phi_i(x)$, $i=1,2, \cdots$ if $i\neq j$ and $i\neq j+1$.
For $x_{j}$ and $x_{j+1}$, the associated piecewise linear basis functions $\phi_j(x)$, $\phi_{j+1}(x)$ are modified.
For example, $\phi_j(x)$ is defined as a piecewise linear function that satisfies
\begin{equation*}\label{}
 \varphi_j(x_j) = 1, \quad \varphi_j(x_i)=0, \quad   \left[\varphi_{j}\right]=0,\quad\left[\beta \varphi^{'}_{j}\right]=0.
\end{equation*}
It has been derived in \cite{li2006immersed} that
\begin{equation*}\label{}
\varphi_{j}\left(x\right)=
\begin{cases}
 ~0  ,\qquad \qquad \qquad~~~~~0\leq x <x_{j-1},\\
   ~\frac{x-x_{j-1}}{h} ,\qquad \quad~~~~~~x_{j-1}\leq x < x_{j},\\
  ~ \frac{x_{j}-x}{D}+1 ,\qquad ~~~~~~~x_{j}\leq x < \alpha,\\
 ~ \frac{\rho \left(x_{j+1}-x\right)}{D} ,\qquad ~~~~~~\alpha\leq x < x_{j+1},\\
 ~0  ,\qquad \qquad \qquad~~~~~x_{j+1}\leq x \leq 1,
\end{cases}
\end{equation*}
where
\begin{equation*}\label{}
\rho =\frac{\beta_{1}}{\beta_{2}},\quad D=h-\frac{\beta_{2}-\beta_{1}}{\beta_{2}}\left(x_{j+1}-\alpha\right),
\end{equation*}
and
\begin{equation*}\label{}
\varphi_{j+1}\left(x\right)=
\begin{cases}
 ~0  ,\qquad \qquad \qquad~~~~~0\leq x <x_{j},\\
   ~\frac{x-x_{j}}{D} ,\qquad \quad\quad~~~~~~x_{j}\leq x < \alpha,\\
  ~ \frac{\rho \left(x-x_{j+1}\right)}{D}+1 , ~~~~~~\alpha\leq x < x_{j+1},\\
  ~ \frac{x_{j+2}-x}{h}  ,\qquad \quad~~~~~~x_{j+1}\leq x < x_{j+2},\\
  ~0  ,\qquad \qquad \qquad~~~~~x_{j+2}\leq x \leq 1.
\end{cases}
\end{equation*}

Let $V_{h, \left(0, 1\right)}\triangleq$ Span $\left\{\varphi_{i}\right\}^{n-1}_{i=1}$
be the immersed finite element space for approximating $u$.
We propose the following bilinear form for problem
$\left(\ref{sol ch2}\right)$:
find $u_{h} \in V_{h, \left(0, 1\right)} \subset H^{1}_{0}\left(0,1\right)$ such that
\begin{equation}\label{c ch2}
a\left(u_{h},v_{h}\right)=\left(f,v_{h}\right),\qquad\forall~ v_{h} \in V_{h, \left(0, 1\right)},
\end{equation}

\subsection{Error analysis for 1D IFEM}

Some error analysis of 1D IFEM has been given in  \cite{li1998immersed, li2006immersed}.
Here we provide somewhat different and more traditional analysis.
As usual, we study the approximation property of the IFE space $V_{h, \left(0, 1\right)}$
so that we can bound the error of the finite element solution using that of the interpolation function.

Assuming that $x_{j}\leq \alpha <x_{j+1}$,
we define the interpolation operator $\pi_{h}: H^{1}_{0}\left(0,1\right)\longrightarrow V_{h, \left(0, 1\right)}$ as follows:
\begin{equation}\label{int ch2}
\pi_{h} u\left(x\right)=
\begin{cases}
 ~\frac{x_{i+1}-x}{h}u\left(x_{i}\right)+\frac{x-x_{i}}{h}u\left(x_{i+1}\right),  \quad x_{i}< x < x_{i+1},~i\neq j,\\
 ~\kappa\left(x-x_{j}\right)+u\left(x_{j}\right),\qquad\qquad ~x_{j}< x\leq \alpha,\\
 ~\kappa \frac{\beta_{1}}{\beta_{2}}\left(x-x_{j+1}\right)+u\left(x_{j+1}\right),  \quad \alpha\leq x <x_{j+1},
 \end{cases}
\end{equation}
where
\begin{equation*}\label{}
\kappa=\frac{\beta_{2}\left(u\left(x_{j+1}\right)-u\left(x_{j}\right)\right)}{\beta_{2}
\left(\alpha-x_{j}\right)-\beta_{1}\left(\alpha-x_{j+1}\right)}.
\end{equation*}
It is easy to verify that $\pi_{h} u\left(x_{i}\right)=u\left(x_{i}\right), i=0, 1,\cdots, n$, $\left[\pi_{h} u\right]=0$, and $\left[\beta\pi_{h} u^{'}\right]=0$.

Now we pay attention to the estimation of $\left\|u\left(x\right)-\pi_{h} u\left(x\right)\right\|_{0}$.
Here, we define $E_{i}=\left[x_{i}, x_{i+1}\right], i\neq j$ and $I=\left[x_{j}, x_{j+1}\right]$.
For a regular element, we use the classical finite element method to  estimate  the error.
\begin{equation}\label{pp ch2}
\left\|u-\pi_{h} u\right\|_{1, E_{i}}\leq ch\left\|u\right\|_{2, E_{i}}.
\end{equation}
We are going  to focus on the error analysis for the element which contains the interface.
We first define
\begin{small}
\begin{equation*}
\widetilde{H}^{2}\left(0, 1\right)=\left\{v\in H^{1}_{0}\left(0, 1\right),\\
~v\in H^{2}\left(0, \alpha\right),~v\in H^{2}\left(\alpha, 1\right)\right\}.
\end{equation*}
\end{small}
equipped with the norm and the semi-norm,
\begin{equation*}
\|u\|^{2}_{\widetilde{H}^{2}\left(0, 1\right)} \triangleq  \|u\|^{2}_{H^{2}\left(0, \alpha\right)}+\|u\|^{2}_{H^{2}\left(\alpha, 1\right)} \end{equation*}
and
\begin{equation*}
|u|^{2}_{\widetilde{H}^{2}\left(0, 1\right)} \triangleq |u|^{2}_{H^{2}\left(0, \alpha\right)}+|u|^{2}_{H^{2}\left(\alpha, 1\right)}.
\end{equation*}
Then we have the following error estimate for the derivative approximation, $\kappa\sim u_x^-$ \ from left.

\begin{lemma}{}
If $u\left(x\right)$ is the solution of $\left(\ref{sol ch2}\right)$, the following inequality holds:
\begin{equation}\label{lem ch2}
\begin{split}
&\left|u_{x}^{-}\left(\alpha\right)-\kappa\right|\leq ch^{\frac{1}{2}}\left\|u\right\|_{2,I},\\
&\left|u_{x}^{+}\left(\alpha\right)-\kappa\rho\right|\leq ch^{\frac{1}{2}}\left\|u\right\|_{2,I}.
\end{split}
\end{equation}
\end{lemma}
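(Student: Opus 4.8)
The plan is to exploit the explicit piecewise-linear form of $\pi_h u$ on the interface element $I=[x_j,x_{j+1}]$, whose slope is exactly $\kappa$ on $(x_j,\alpha)$ and $\kappa\rho$ on $(\alpha,x_{j+1})$ with $\rho=\beta_1/\beta_2$. Writing $h^-=\alpha-x_j$ and $h^+=x_{j+1}-\alpha$, the continuity of $\pi_h u$ at $\alpha$ together with the interpolation conditions $\pi_h u(x_j)=u(x_j)$ and $\pi_h u(x_{j+1})=u(x_{j+1})$ shows that $\kappa=\bigl(u(x_{j+1})-u(x_j)\bigr)/(h^-+\rho h^+)$, which is merely a rewriting of the given formula after clearing the factor $\beta_2$. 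Thus the whole proof reduces to comparing this modified difference quotient against the one-sided derivative $u_x^-(\alpha)$.

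First I would expand $u(x_{j+1})-u(x_j)=\int_{x_j}^{\alpha}u'+\int_{\alpha}^{x_{j+1}}u'$ and, on each subinterval, write $u'$ as the limiting slope at $\alpha$ plus a second-derivative remainder via the fundamental theorem of calculus: $u'(x)=u_x^-(\alpha)-\int_x^{\alpha}u''$ on $(x_j,\alpha)$ and $u'(x)=u_x^+(\alpha)+\int_{\alpha}^{x}u''$ on $(\alpha,x_{j+1})$. Invoking the flux jump condition $[\beta u_x]=0$, i.e.\ $u_x^+(\alpha)=\rho\,u_x^-(\alpha)$, the leading terms collapse exactly into $u_x^-(\alpha)\,(h^-+\rho h^+)$, giving
\[
\kappa-u_x^-(\alpha)=\frac{R^+-R^-}{h^-+\rho h^+},
\]
where $R^\pm$ denote the double integrals of $u''$. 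This cancellation is the conceptual heart of the argument: it is precisely the IFE interpolation, constructed to honour the jump condition, that forces the leading-order terms to match so that only second-order remainders survive.

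Next I would estimate the two pieces of this quotient. After swapping the order of integration, $R^-=\int_{x_j}^{\alpha}(t-x_j)\,u''(t)\,dt$ and likewise for $R^+$, so Cauchy--Schwarz yields $|R^-|\le c\,(h^-)^{3/2}\|u''\|_{0,(x_j,\alpha)}$ and $|R^+|\le c\,(h^+)^{3/2}\|u''\|_{0,(\alpha,x_{j+1})}$, hence $|R^+-R^-|\le c\,h^{3/2}\|u\|_{2,I}$. For the denominator I would use $\beta\ge\beta_0>0$ to obtain the uniform lower bound $h^-+\rho h^+\ge\min(1,\rho)\,h\ge c_0 h$. Dividing produces $|\kappa-u_x^-(\alpha)|\le c\,h^{1/2}\|u\|_{2,I}$, the first inequality. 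The second inequality then follows immediately, since $|u_x^+(\alpha)-\kappa\rho|=\rho\,|u_x^-(\alpha)-\kappa|$ by the jump condition, and the bounded factor $\rho$ is absorbed into the constant.

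The main obstacle --- or rather the point that explains the apparently suboptimal exponent --- is the remainder estimate. With only $u\in H^2(I)$ (piecewise) one cannot bound $u''$ pointwise, so the Cauchy--Schwarz step trades one power of the interval length for the $L^2$ norm of $u''$, leaving $h^{3/2}$ in the numerator against $h$ in the denominator and hence the $h^{1/2}$ rate. One must also verify that the lower bound on $h^-+\rho h^+$ is uniform in the arbitrary position of $\alpha$ within the element, including the degenerate case $h^-=0$, which is covered since $h^+>0$ and $\rho>0$.
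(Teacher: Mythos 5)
Your proposal is correct and follows essentially the same route as the paper: Taylor/integral expansion about $\alpha$, cancellation of the leading terms via the jump conditions $[u]=0$ and $[\beta u_x]=0$, a Cauchy--Schwarz bound of order $h^{3/2}$ on the second-derivative remainders, and a uniform lower bound of order $h$ on the denominator $\beta_2(\alpha-x_j)+\beta_1(x_{j+1}-\alpha)$. Your version is slightly more explicit about the denominator bound and the reduction of the second inequality to the first via $u_x^+(\alpha)=\rho\,u_x^-(\alpha)$, but the substance is identical.
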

\begin{proof}
Using the Taylor expansion at $\alpha$ and the jump conditions $\left(i.e.,  \left(\ref{jumpcond ch2}\right)\right)$, we have
\begin{eqnarray*}
&&\left|u_{x}^{-}\left(\alpha\right)-\kappa\right|=\left|u_{x}^{-}\left(\alpha\right)-\frac{\beta_{2}\left(u\left(x_{j+1}\right)-u\left(x_{j}\right)\right)}{\beta_{2}\left(\alpha-x_{j}\right)-\beta_{1}\left(\alpha-x_{j+1}\right)}\right|\\
&&~~~~\quad~\qquad~=\bigg|u_{x}^{-}\left(\alpha\right)-\frac{\beta_{2}\left\{u^{+}\left(\alpha\right)+u^{+}_{x}\left(\alpha\right)\left(x_{j+1}-\alpha\right)+\int_\alpha^{x_{j+1}} u^{''}\left(t\right)\left(x_{j+1}-t\right)dt\right\}}{\beta_{2}\left(\alpha-x_{j}\right)-\beta_{1}\left(\alpha-x_{j+1}\right)}\\
&&~~\qquad\qquad\quad~~~-\frac{u^{-}\left(\alpha\right)+u^{-}_{x}\left(\alpha\right)\left(x_{j}-\alpha\right)+\int_\alpha^{x_{j}}u^{''}\left(t\right)\left(x_{j}-t\right)dt}{\beta_{2}\left(\alpha-x_{j}\right)-\beta_{1}\left(\alpha-x_{j+1}\right)}\bigg|\\
&&~~~\quad~\qquad~~=\left|\frac{\beta_{2}\left[\int_\alpha^{x_{j+1}} u^{''}\left(t\right)\left(x_{j+1}-t\right)dt-\int_\alpha^{x_{j}} u^{''}\left(t\right)\left(x_{j}-t\right)dt\right]}{\beta_{2}\left(\alpha-x_{j}\right)-\beta_{1}\left(\alpha-x_{j+1}\right)}\right| %\qquad\quad~~({\rm{By}}~\ref{1.4})
\\
&&~~~\quad~\qquad~~\leq ch^{\frac{1}{2}}\left\|u\right\|_{2,I}, \qquad\qquad\qquad~~~~\qquad(\rm{Cauchy-Schwarz ~~ Inequality})
\end{eqnarray*}
where $c$ is a positive constant depending only on the coefficients $\beta$, $q(x)$.
This completes the proof of the lemma.
\end{proof}

The lemma gives rough estimates of the first order derivative of the interpolation function
from each side of the interface with an $O(\sqrt{h})$ convergence order
compared with that $O(h)$ in the $H^1(0,1)$ of the interpolation function.
Later on, we will explain our method to get  second order accurate derivative
from each side of the interface.

In a similar way, we can prove that
\begin{equation*}
\left|u_{x}^{+}\left(\alpha\right)-\kappa\rho\right|\leq ch^{\frac{1}{2}}\left\|u\right\|_{2,I}.
\end{equation*}

\subsection{Convergence analysis of 1D IFEM}

Although some error analysis is given in \cite{li1998immersed},
below we provide some different,
more traditional finite element analysis with some results
that are useful for accurate gradient computations at the interface.
First we prove the following theorem on the accuracy of the interpolating function~$\pi_{h} u$.
\begin{theorem}{}
If $u\left(x\right)$ is the solution of $\left(\ref{sol ch2}\right)$,
and $\pi_{h}u\left(x\right)$ is the interpolation function defined in $\left(\ref{int ch2}\right)$, then
\begin{equation}\label{theo ch2}
\left\|u-\pi_{h} u\right\|_{1, I}\leq ch\left\|u\right\|_{2,I},
\end{equation}
where $c$ is a positive constant depending on the interface location,
the coefficients $\beta$, and $q(x)$.
\end{theorem}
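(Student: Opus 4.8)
The plan is to bound the full $H^1$ norm on the interface element $I=[x_j,x_{j+1}]$ by estimating the derivative (seminorm) part directly and then recovering the $L^2$ part almost for free through a Poincar\'e argument. Write
\[
\|u-\pi_h u\|_{1, I}^2 = \|u-\pi_h u\|_{0,I}^2 + |u-\pi_h u|_{1,I}^2 ,
\]
and record the two structural facts that drive the estimate: the interpolant $\pi_h u$ is continuous across $\alpha$ and piecewise linear on $I$, with derivative equal to the constant $\kappa$ on $(x_j,\alpha)$ and $\kappa\rho$ on $(\alpha,x_{j+1})$; moreover $(u-\pi_h u)(x_j)=0$ since $\pi_h u$ matches $u$ at the nodes. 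Because both $u$ and $\pi_h u$ are continuous at $\alpha$ (only their derivatives jump), the difference $u-\pi_h u$ lies in $H^1(I)$, so the fundamental theorem of calculus applies on all of $I$.

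First I would handle the seminorm, which is the dominant contribution. On the left subinterval split
\[
u'(x)-\kappa = \bigl(u'(x)-u_x^-(\alpha)\bigr) + \bigl(u_x^-(\alpha)-\kappa\bigr).
\]
The first term equals $\int_x^\alpha u''(t)\,dt$, so Cauchy--Schwarz together with $\alpha-x_j\le h$ gives $\|u'-u_x^-(\alpha)\|_{0,(x_j,\alpha)}\le c h\,|u|_{2,I}$. The second term is constant in $x$, hence its $L^2$ norm over $(x_j,\alpha)$ is $(\alpha-x_j)^{1/2}\,\bigl|u_x^-(\alpha)-\kappa\bigr|$; here I invoke the preceding Lemma, whose $O(h^{1/2})$ bound multiplied by the length factor $(\alpha-x_j)^{1/2}\le h^{1/2}$ produces exactly $c h\,\|u\|_{2,I}$. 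Adding the two pieces yields $\|u'-\kappa\|_{0,(x_j,\alpha)}\le c h\,\|u\|_{2,I}$, and the identical argument applied to $u_x^+(\alpha)-\kappa\rho$ on $(\alpha,x_{j+1})$ controls the right subinterval, so that $|u-\pi_h u|_{1,I}\le c h\,\|u\|_{2,I}$.

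For the $L^2$ part I would exploit $(u-\pi_h u)(x_j)=0$: writing $(u-\pi_h u)(x)=\int_{x_j}^x (u-\pi_h u)'(t)\,dt$ and applying Cauchy--Schwarz gives the Poincar\'e-type bound $\|u-\pi_h u\|_{0,I}\le c h\,|u-\pi_h u|_{1,I}\le c h^2\,\|u\|_{2,I}$, which is of higher order and is therefore absorbed. Combining the seminorm and $L^2$ estimates yields the claimed $\|u-\pi_h u\|_{1,I}\le c h\,\|u\|_{2,I}$.

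The step I expect to be the crux is the seminorm bound on the constant-slope error: the Lemma furnishes only an $O(h^{1/2})$ estimate for $|u_x^-(\alpha)-\kappa|$, which at first sight looks too weak to deliver the claimed $O(h)$. The essential point is that this error is \emph{constant} across an interval of length $O(h)$, so its $L^2$ mass carries an additional $(\alpha-x_j)^{1/2}=O(h^{1/2})$ factor, and the two half-powers combine into a full power of $h$. Keeping careful track of this length factor, rather than discarding it, is what makes the argument close, and I would verify that the hidden constants depend only on $\beta_1,\beta_2,\rho$, $q$ and the interface location, as in the statement.
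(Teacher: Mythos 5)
Your proof is correct and follows essentially the same route as the paper: both arguments hinge on pairing the Lemma's $O(h^{1/2})$ bound for $|u_x^-(\alpha)-\kappa|$ (and for $|u_x^+(\alpha)-\kappa\rho|$) with the $O(h^{1/2})$ length factor of each subinterval to produce the $O(h)$ seminorm estimate. The only difference is cosmetic: the paper derives the $L^2$ part from a pointwise $O(h^{3/2})$ Taylor bound and then integrates, whereas you recover it via a Poincar\'e argument from the node $x_j$; both yield the higher-order $O(h^2)$ term that is absorbed.
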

\begin{proof}
Proof of theorem If $x_{j}\leq x\leq \alpha$,
then using the Taylor expansion, we have
\begin{eqnarray*}
&&\qquad\qquad~~u(x)=u\left(x_{j}\right)+u^{'}\left(x_{j}\right)\left(x-x_{j}\right)+\int_{x_{j}}^x u^{''}\left(t\right)\left(x-t\right)dt\\
&&~~~\qquad\qquad\quad~~=u\left(x_{j}\right)+\left[u^{-}_{x}\left(\alpha\right)+\int_{\alpha}^{x_{j}} u^{''}\left(x\right)dx\right]\left(x-x_{j}\right)+\int_{x_{j}}^x u^{''}\left(t\right)\left(x-t\right)dt\\
&&~~~\qquad\qquad\quad~~=u\left(x_{j}\right)+u^{-}_{x}\left(\alpha\right)\left(x-x_{j}\right)+\int_{\alpha}^{x_{j}} u^{''}\left(x\right)dx\left(x-x_{j}\right)+\int_{x_{j}}^x u^{''}\left(t\right)\left(x-t\right)dt.
\end{eqnarray*}
By $\left(\ref{lem ch2}\right)$ and the Cauchy-Schwartz inequality, we get
\begin{eqnarray*}
&&\left|u(x)-\pi_{h} u\left(x\right)\right|=\left|\left(u_{x}^{-}\left(\alpha\right)-k\right)\left(x-x_{j}\right)+\int_{\alpha}^{x_{j}} u^{''}\left(x\right)dx\left(x-x_{j}\right)+\int_{x_{j}}^{x} u^{''}\left(t\right)\left(x-t\right)dt\right|\\
&&~~~\qquad\qquad\quad~~\leq c h^{\frac{3}{2}}\left\|u\right\|_{2,I},%\qquad ~~\qquad \left( \rm{By}~\rm{Lemma~2.1}~and~\rm{Cauchy-Schwarz ~~ Inequality}\right)
\end{eqnarray*}
and furthermore
\begin{eqnarray*}
&&\left|\left(u\left(x\right)-\pi_{h} u\left(x\right)\right)^{'}\right|=\left|u_{x}^{-}\left(\alpha\right)-k+\int_{\alpha}^{x_{j}}u^{''}\left(x\right)dx+\int_{x_{j}}^{x}u^{''}\left(t\right)dt\right|\\
&&\quad~~~~\qquad\qquad\quad~~\leq ch^{\frac{1}{2}}\left\|u\right\|_{2,I}.
\end{eqnarray*}

If $\alpha \leq x\leq x_{j+1}$, the proof is similar. Thus we also have,
\begin{equation*}
\left|u(x)-\pi_{h} u\left(x\right)\right|\leq c h^{\frac{3}{2}}\left\|u\right\|_{2,I},   \qquad\left|\left(u\left(x\right)-\pi_{h} u\left(x\right)\right)^{'}\right|\leq ch^{\frac{1}{2}}\left\|u\right\|_{2,I}.
\end{equation*}

We proceed with the remaining proof below
\begin{eqnarray*}
&&\left\|u\left(x\right)-\pi_{h} u\left(x\right)\right\|_{0, I}=\left(\int_{x_{j}}^{\alpha}\left(u\left(x\right)-\pi_{h} u\left(x\right)\right)^{2}dx+\int_{\alpha}^{x_{j+1}}\left(u\left(x\right)-\pi_{h} u\left(x\right)\right)^{2}dx\right)^{\frac{1}{2}}\\
&&~~\quad~~~\qquad\qquad\quad~~\leq c\left(\int_{x_{j}}^{\alpha}h^{3}\left\|u\right\|_{2,I}^{2}dx+\int_{\alpha}^{x_{j+1}}h^{3}\left\|u\right\|_{2,I}^{2}dx\right)^{\frac{1}{2}} \\
&&~~\quad~~~\qquad\qquad\quad~~\leq ch^{2}\left\|u\right\|_{2,I},
\end{eqnarray*}
in $L^2$ and we continue to $H^1$,
\begin{eqnarray*}
&&\left|u\left(x\right)-\pi_{h} u\left(x\right)\right|_{1, I}=\left(\int_{x_{j}}^{\alpha}\left[\left(u\left(x\right)-\pi_{h} u\left(x\right)\right)^{'}\right]^{2}dx+\int_{\alpha}^{x_{j+1}}\left[\left(u\left(x\right)-\pi_{h} u\left(x\right)\right)^{'}\right]^{2}dx\right)^{\frac{1}{2}}\\
%&&~~~\qquad~~\qquad\quad~~\leq c\left(\int_{x_{j}}^{\alpha}h\left|u\right|_{2}^{2}dx\right)^{\frac{1}{2}} \\
&&~~~\qquad~~~~\qquad\quad~~\leq c h\left\|u\right\|_{2,I}.
\end{eqnarray*}

Combining all above to get,
\begin{eqnarray*}
&&\left\|u\left(x\right)-\pi_{h} u\left(x\right)\right\|_{1, I}=\left(\left\|u\left(x\right)-\pi_{h} u\left(x\right)\right\|_{0, I}^{2}+\left|u(x)-\pi_{h} u(x)\right|_{1, I}^{2}\right)^{\frac{1}{2}}\\
&&~~~\quad\qquad\qquad\quad~~~~\leq c h\left\|u\right\|_{2, I},
\end{eqnarray*}
which completes the proof.
\end{proof}

The following theorem states that the IFEM in 1D provides optimal convergence as that the FEM for regular problems.
\begin{theorem}{}
If $u\left(x\right)$ is the solution of $\left(\ref{sol ch2}\right)$,
and $\pi_{h}u\left(x\right)$ is the interpolating function defined in $\left(\ref{int ch2}\right)$, then
\begin{equation}\label{error ch2}
\left\|u-\pi_{h} u\right\|_{1}\leq ch\left\|u\right\|_{2},
\end{equation}
\end{theorem}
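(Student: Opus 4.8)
The plan is to derive the global estimate by summing the two local estimates already in hand: the classical interpolation bound $\left(\ref{pp ch2}\right)$ on each regular element $E_i$ with $i\neq j$, and the interface-element bound $\left(\ref{theo ch2}\right)$ on $I=\left[x_j,x_{j+1}\right]$. Since the squared $H^1$ norm is additive over the mesh partition, I would first split
\[
\left\|u-\pi_h u\right\|_1^2 = \sum_{i\neq j}\left\|u-\pi_h u\right\|_{1,E_i}^2 + \left\|u-\pi_h u\right\|_{1,I}^2 .
\]

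Next I would insert the local bounds. On each regular element $\left(\ref{pp ch2}\right)$ gives $\left\|u-\pi_h u\right\|_{1,E_i}^2\leq c^2 h^2\left\|u\right\|_{2,E_i}^2$, while on the single element containing the interface the preceding theorem gives $\left\|u-\pi_h u\right\|_{1,I}^2\leq c^2 h^2\left\|u\right\|_{2,I}^2$. Substituting these and factoring out the common $c^2h^2$ yields
\[
\left\|u-\pi_h u\right\|_1^2 \leq c^2 h^2\left(\sum_{i\neq j}\left\|u\right\|_{2,E_i}^2 + \left\|u\right\|_{2,I}^2\right).
\]

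Finally I would identify the bracketed sum with the square of the global broken Sobolev norm. Because $u$ fails to be globally $H^2$ only across $\alpha$, and $\alpha$ lies in the single element $I$, the local $H^2$ norms over all elements reassemble exactly the piecewise norm $\left\|u\right\|_{\widetilde{H}^2(0,1)}^2=\left\|u\right\|_{H^2(0,\alpha)}^2+\left\|u\right\|_{H^2(\alpha,1)}^2$, which is the quantity written $\left\|u\right\|_2$ on the right-hand side of $\left(\ref{error ch2}\right)$. Taking square roots then gives $\left\|u-\pi_h u\right\|_1\leq ch\left\|u\right\|_2$.

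The argument is essentially bookkeeping, so I do not expect a serious obstacle; all of the analytic difficulty has already been absorbed into the interface-element estimate $\left(\ref{theo ch2}\right)$, whose proof handled the $O(\sqrt{h})$ pointwise derivative error and recovered the optimal $O(h)$ rate after integration over $I$. The one point that requires care is purely notational: the symbol $\left\|u\right\|_2$ in the statement must be read as the broken $\widetilde{H}^2(0,1)$ norm rather than the ordinary $H^2(0,1)$ norm, since the latter is infinite for a solution with a genuine jump in $u'$ at $\alpha$. I would also remark that the final constant $c$ is the larger of the constants from the two local estimates and hence remains independent of $h$, as the convergence claim requires.
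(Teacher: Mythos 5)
Your proposal is correct and follows essentially the same route as the paper: decompose the squared $H^1$ norm over the regular elements and the interface element, apply the local bounds $(\ref{pp ch2})$ and $(\ref{theo ch2})$, and reassemble into $ch\left\|u\right\|_{2}$. Your added remark that $\left\|u\right\|_{2}$ must be interpreted as the broken $\widetilde{H}^{2}(0,1)$ norm is a worthwhile clarification that the paper leaves implicit.
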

\begin{proof}
We would using $\left(\ref{pp ch2}\right)$ and $\left(\ref{theo ch2}\right)$ to get
\begin{eqnarray*}
&&\left\|u\left(x\right)-\pi_{h} u\left(x\right)\right\|_{1}=\left(\int_{0}^{1}\left(u\left(x\right)-\pi_{h} u\left(x\right)\right)^{2}+\left(\left(u\left(x\right)-\pi_{h} u\left(x\right)\right)^{'}\right)^{2}dx\right)^{\frac{1}{2}}\\
&&\qquad\qquad\qquad~~~~~=\left(\sum_{E_{i}}\left\|u-\pi_{h} u\right\|_{1, E_{i}}^{2}+\left\|u-\pi_{h} u\right\|_{1, I}^{2}\right)^{\frac{1}{2}}\\
&&\qquad\qquad\qquad~~~~~\leq \left(\sum_{E_{i}} ch^{2}\left\|u\right\|_{2, E_{i}}^{2}+ch^{2}\left\|u\right\|_{2, I}^{2}\right)^{\frac{1}{2}}\\
&&\qquad\qquad\qquad~~~~~\leq ch\left(\sum_{E_{i}}\left\|u\right\|_{2, E_{i}}^{2}+\left\|u\right\|_{2, I}^{2}\right)^{\frac{1}{2}} \\
&&~~~\qquad\qquad\qquad~~\leq ch\left\|u\right\|_{2}.
\end{eqnarray*}
This completes the proof of the theorem.
\end{proof}

\subsection{An accurate flux computation at the left of the interface}

In this sub-section, we explain how to get an accurate flux or
first order derivative of the solution at the interface from the left side of the interface.
The method is based on the approach proposed in \cite{wheeler1974galerkin}
for flux computations at boundaries.
The method is based on the use of the Galerkin solution of the problem
and it is different from other posterior error analysis.

We define the following  $\Gamma_{\alpha}^{-}$
\begin{equation}\label{f ch2}
  \Gamma_{\alpha}^{-}\triangleq\frac{1}{\alpha}\{(\beta u_{h}^{'}, 1)_{(0,\alpha)}
  +(qu_{h}-f, x)_{(0,\alpha)}\},
\end{equation}
as an approximation to the exact flux $\beta_{1}u_{x}^{-}\left(\alpha\right)$.
Below we show that this is a second order approximation which improves the accuracy of the flux by one order
 compared the estimate in (\ref{error ch2}).
\begin{theorem}{}
If $u\left(x\right)$ is the solution of $\left(\ref{sol ch2}\right)$,
$u_{h}\left(x\right)$ is the Galerkin approximation of the solution of $u\left(x\right)$,
$\Gamma_{\alpha}^{-}$ is as defined above, then
\begin{equation}\label{a ch2}
\left|\Gamma_{\alpha}^{-}-\beta_{1}u_{x}^{-}\left(\alpha\right)\right|\leq ch^{2}\left\|u\right\|_{2}.
\end{equation}
\end{theorem}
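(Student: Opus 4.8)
The plan is to exploit that the recovery functional (\ref{f ch2}) is \emph{exact} on the true solution, so that the whole error becomes a linear functional of $u_h-u$. First I would substitute the differential equation into the definition: since $-(\beta u')'+qu=f$ on $(0,\alpha)$ we have $qu-f=(\beta u')'$, and integrating by parts on $(0,\alpha)$ with $u(0)=0$ gives
\[
(qu-f,x)_{(0,\alpha)}=\int_0^\alpha(\beta u')'\,x\,dx=\alpha\,\beta_1 u_x^-(\alpha)-(\beta u',1)_{(0,\alpha)}.
\]
Hence $\frac1\alpha\{(\beta u',1)_{(0,\alpha)}+(qu-f,x)_{(0,\alpha)}\}=\beta_1 u_x^-(\alpha)$; i.e. $\Gamma_\alpha^-$ reproduces the exact left flux when $u_h$ is replaced by $u$. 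This identity is the heart of the Wheeler-type construction and is what makes a second order estimate conceivable at all.

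Subtracting, the error is
\[
\Gamma_\alpha^--\beta_1 u_x^-(\alpha)=\frac1\alpha\big\{(\beta(u_h-u)',1)_{(0,\alpha)}+(q(u_h-u),x)_{(0,\alpha)}\big\}.
\]
Because $\beta\equiv\beta_1$ on $(0,\alpha)$ and $(u_h-u)(0)=0$, the first inner product collapses to a pointwise quantity, $(\beta(u_h-u)',1)_{(0,\alpha)}=\beta_1(u_h-u)(\alpha)$, so the flux error is governed entirely by the value of $u_h-u$ at the interface point. The second inner product I would bound by $c\|u_h-u\|_0$; by C\'ea's lemma and the interpolation estimate (\ref{error ch2}) one has $\|u-u_h\|_1\le ch\|u\|_2$, and a standard Aubin--Nitsche duality then gives $\|u-u_h\|_0\le ch^2\|u\|_2$. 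The point worth stressing is that the dual problem for the $L^2$ estimate has right-hand side in $L^2(0,1)$ with \emph{no} source on the interface, so its solution satisfies the homogeneous jump conditions (\ref{jumpcond ch2}) and (\ref{error ch2}) applies to it verbatim; thus the $q$-term is genuinely $O(h^2)$, and the whole theorem reduces to the sharp pointwise estimate $|(u_h-u)(\alpha)|\le ch^2\|u\|_2$.

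That pointwise interface estimate is the step I expect to be the main obstacle. The natural tool is duality: letting $G\in H^1_0(0,1)$ solve $a(v,G)=v(\alpha)$ for all $v$, Galerkin orthogonality gives $(u-u_h)(\alpha)=a(u-u_h,G-G_h)$ for every $G_h\in V_{h,(0,1)}$, so $|(u-u_h)(\alpha)|\le c\|u-u_h\|_1\,\inf_{G_h}\|G-G_h\|_1$. The difficulty is that $G$ carries a nonzero flux jump at $\alpha$ (the point source sits exactly on the interface), so $[\beta G']\neq0$ and $G$ falls outside the class for which (\ref{error ch2}) was proved; a careless Cauchy--Schwarz then loses half an order and yields only $O(h^{3/2})$. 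To recover the full order I would avoid a crude bound and instead use the explicit local structure on $I=[x_j,x_{j+1}]$. When $q\equiv0$ one can check that the immersed scheme is nodally exact, $u_h(x_i)=u(x_i)$, so $u_h=\pi_h u$ on $I$ and, writing the Taylor remainders $P=\int_\alpha^{x_j}u''(t)(x_j-t)\,dt$ and $Q=\int_\alpha^{x_{j+1}}u''(t)(x_{j+1}-t)\,dt$ from the Lemma, one gets the identity
\[
(u_h-u)(\alpha)=\frac{\beta_2 h_1\,Q+\beta_1 h_2\,P}{\beta_2 h_1+\beta_1 h_2},\qquad h_1=\alpha-x_j,\ \ h_2=x_{j+1}-\alpha.
\]
Here the weights are $O(1)$ and sum to one, while the leading second order parts of $P$ and $Q$ are $O(h^2)$, so the combination is $O(h^2)$; the half-order gap in the crude duality is exactly the slack between the global $L^2$ bound on these remainders and their true local second order behaviour. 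For general $q$ the remaining piece $(u_h-\pi_h u)(\alpha)$ would be controlled by the same duality. I would make this local cancellation rigorous---cleanest under a slightly stronger, piecewise $W^{2,\infty}$ (equivalently $\widetilde H^3$) regularity of $u$---and combine it with the $O(h^2)$ $q$-term to conclude $|\Gamma_\alpha^--\beta_1 u_x^-(\alpha)|\le ch^2\|u\|_2$.
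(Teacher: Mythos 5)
Your reductions are correct, and your route is genuinely different from the paper's. You establish that the Wheeler functional is exact on the true solution, then use $\beta\equiv\beta_1$ on $(0,\alpha)$ and $(u_h-u)(0)=0$ to collapse the flux term to $\frac{\beta_1}{\alpha}(u_h-u)(\alpha)$, so that after an Aubin--Nitsche bound on the $q$-term the whole theorem rests on the single pointwise estimate $|(u-u_h)(\alpha)|\le ch^2\|u\|_2$. The paper instead follows Wheeler's original machinery: it introduces an auxiliary discrete function $Y$ on $(0,\alpha)$ defined by a variational problem whose right-hand side carries the exact flux $\beta_1u_x^-(\alpha)v_h(\alpha)$, bounds the error by $c\{\|u_h-Y\|_0+|(u_h-Y)(\alpha)|\}$, runs two duality arguments (one with $L^*\varphi=-w$ to absorb $\|w\|_0$ into $|w(\alpha)|$, one with $L^*\xi=0$, $\beta_1\xi'(\alpha)=1$ to show $|(u-Y)(\alpha)|\le ch^2\|u\|_2$), and closes with $|(u_h-Y)(\alpha)|\le|(u-u_h)(\alpha)|+|(u-Y)(\alpha)|$. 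Note that the paper's final line therefore also invokes $|(u-u_h)(\alpha)|\le ch^2\|u\|_2$, and does so \emph{without proof} --- precisely the step you isolate as the obstacle. Your explicit formula $(u_h-u)(\alpha)=(\beta_2h_1Q+\beta_1h_2P)/(\beta_2h_1+\beta_1h_2)$ checks out (nodal exactness for $q\equiv 0$ holds because the discrete Green's functions at the nodes lie in the IFE space, and the Taylor expansions match the Lemma), and it is the more informative treatment: it shows the convex weights are harmless, identifies the error as driven by the remainders $P,Q$, and honestly exposes that under the stated piecewise-$H^2$ regularity Cauchy--Schwarz only yields $|P|,|Q|\le ch^{3/2}\|u\|_{2,I}$, so the clean $h^2\|u\|_2$ rate needs piecewise $W^{2,\infty}$ (or one settles for $o(h^{3/2})$). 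Your diagnosis of why crude duality loses half an order --- the dual solution carries the flux jump $[\beta G']=-1$, which the IFE space cannot represent on the interface element --- is also right. In short, your route is shorter and makes explicit a regularity caveat that the paper's proof silently skips; the paper's route buys a self-contained $O(h^2)$ bound for $|(u-Y)(\alpha)|$, but not for the pointwise term that both arguments ultimately need.
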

\begin{proof}
We define $Y \in V_{h}$ as a function that satisfied $Y(0)=0$ and
\begin{equation}\label{g ch2}
\left(\beta Y^{'} , v_{h}^{'}\right)_{\left(0,\alpha\right)}+\left(qY-f , v_{h}\right)_{\left(0,\alpha\right)}=\beta_{1}u_{x}^{-}\left(\alpha\right)v_{h}\left(\alpha\right) , \qquad  \forall v_{h}\in V_{h}, \ and ~v_{h}\left(0\right)=0.
\end{equation}
\ Subtracting (\ref{f ch2}) with $v_{h}=x/\alpha$~from (\ref{g ch2}), we have
\begin{eqnarray*}
&&\left|\Gamma_{\alpha}^{-}-\beta_{1}u_{x}^{-}\left(\alpha\right)\right|=\frac{1}{\alpha}\left|\left(\beta\left(u_{h}-Y\right)^{'}, 1 \right)_{\left(0,\alpha\right)}+\left(q\left(u_{h}-Y\right), x\right)_{\left(0,\alpha\right)}\right|\\
&&~~~\qquad\qquad\quad~~~\leq c\left\{\left\|u_{h}-Y\right\|_{0}+\left|\left(u_{h}-Y\right)\left(\alpha\right)\right|\right\}.
\end{eqnarray*}
From (\ref{c ch2}) and (\ref{g ch2}) we can see that
\begin{equation}\label{}
\left(\beta \left(u_{h}-Y\right)^{'} , v_{h}^{'}\right)_{\left(0,\alpha\right)}+\left(q\left(u_{h}-Y\right) , v_{h} \right)_{\left(0,\alpha\right)}=0 , \qquad \forall v_{h} \in V_{h,\left(0,\alpha\right)};
\end{equation}
Set $w=u_{h}-Y$, and $v_{h}=w-xw(\alpha)$, we get the following equation
\begin{equation*}\label{}
\left(\beta w^{'} , w^{'}\right)_{\left(0,\alpha\right)}+\left(qw , w\right)_{\left(0,\alpha\right)}=\left(\beta w^{'} , w\left(\alpha\right)\right)_{\left(0,\alpha\right)}+\left(qw , xw\left(\alpha\right)\right)_{\left(0,\alpha\right)},
\end{equation*}
and thus we have
\begin{equation}\label{h ch2}
\left\|w\right\|_{1}\leq c\left\{\left\|w\right\|_{0}+\left|w\left(\alpha\right)\right|\right\}.
\end{equation}

Next we construct the following auxiliary problem.
Let $\varphi \in H^{2}(0, \alpha)\cap \widetilde{H}^{1}\left(0, \alpha\right)$ be the solution of
the following
\begin{equation*}\label{}
\begin{cases}
 L^{*}\varphi =-w ,\quad   w\in \left(0 , \alpha\right),\\
\varphi\left(0\right)=\varphi\left(\alpha\right)=0.
 \end{cases}
\end{equation*}
 We also assume that
\begin{equation*}\label{}
\left\|\varphi\right\|_{2}\leq  c \left\|w\right\|_{0}.
\end{equation*}
Then, for an appropriately chosen $\pi\varphi \in V_{h,\left(0,\alpha\right)}$, we proceed to get the following,
\begin{eqnarray*}
&&\left(w , w\right)_{\left(0,\alpha\right)}=\left|-\left(w , L^{*}\varphi\right)_{\left(0,\alpha\right)}\right|=\left|\left(w , \left(\beta\varphi^{'}\right)^{'}-q\varphi\right)_{\left(0,\alpha\right)}\right|\\
&&\qquad~~~~~~~~=\left|-\left(\beta w^{'} , \varphi^{'}\right)_{\left(0,\alpha\right)}-\left(qw , \varphi \right)_{\left(0,\alpha\right)}+\left(\beta ^{-}\varphi^{'}w\right)\left(\alpha\right)\right|\\
&&\qquad\quad~~~~~\leq\left|\left(\beta w^{'} , \varphi^{'}-\left(\pi\varphi\right)^{'}\right)_{\left(0,\alpha\right)}\right|+\left|\left(qw , \varphi-x\right)_{\left(0,\alpha\right)}\right|+\left|\left(\beta\varphi^{'} w\right)\left(\alpha\right)\right|\\
&&\qquad\quad~~~~~\leq c \left\{\left\|w\right\|_{1}\left\|\varphi-\pi\varphi\right\|_{1}+\left|\varphi^{'}\left(\alpha\right)\right|\left|w\left(\alpha\right)\right|\right\}\\
&&\qquad\quad~~~~~\leq c \left\{h\left\|w\right\|_{1}\left\|\varphi\right\|_{2}+\left\|w\right\|_{0}\left|w\left(\alpha\right)\right|\right\}\\
&&\qquad\quad~~~~~\leq c\left\|w\right\|_{0}\left\{h\left\|w\right\|_{1}+\left|w\left(\alpha\right)\right|\right\}.
\end{eqnarray*}
The above yields,
\begin{equation}\label{i ch2}
\left\|w\right\|_{0}\leq c \left\{h\left\|w\right\|_{1}+\left|w\left(\alpha\right)\right|\right\}.
\end{equation}
For $h$ sufficiently small (\ref{h ch2}) and (\ref{i ch2}) imply that
\begin{equation}\label{}
\left\|w\right\|_{0}\leq c \left|w\left(\alpha\right)\right|,
\end{equation}
where $c$ is a positive constant depending only on the coefficients $\beta$, $q(x)$.

We now derive an estimate of $\left|w\left(\alpha\right)\right|$, using the new auxiliary problem
\begin{equation*}\label{}
\begin{cases}
 L^{*}\xi =0 ,\quad   \xi\in \left(0 , \alpha\right),\\
\xi\left(0\right)=0 ,\quad ~\beta_{1}\left(\alpha\right)\xi^{'}\left(\alpha\right)=1 .
 \end{cases}
\end{equation*}
Let $\eta=u-Y$, then we can get
\begin{equation}\label{}
\left(\beta\eta^{'} , v_{h}\right)_{\left(0,\alpha\right)}+\left(q\eta , v_{h}\right)_{\left(0,\alpha\right)}=0 , \quad~\forall v_{h}\in V_{h}~~ such~that ~~v_{h}\left(0\right)=0.
\end{equation}
Furthermore, using
\begin{eqnarray*}
&&0=-\left(\eta , L^{*}\xi\right)_{\left(0,\alpha\right)}=-\left(\eta , -\left(\beta\xi^{'}\right)^{'}+q\xi\right)_{\left(0,\alpha\right)}\\
&&\qquad\quad\qquad\qquad~~~=\left(\eta , \left(\beta\xi^{'}\right)^{'}-q\xi\right)_{\left(0,\alpha\right)}\\
&&\qquad\quad\qquad\qquad~~~=\left(-\beta\eta^{'} , \xi^{'}\right)_{\left(0,\alpha\right)}+\left(-q\eta , \xi \right)_{\left(0,\alpha\right)}+\eta\left(\alpha\right),
\end{eqnarray*}
we have
\begin{eqnarray*}
&&\left|\eta\left(\alpha\right)\right|=\left|\left(\beta\eta^{'} , \xi^{'}\right)_{\left(0,\alpha\right)}+\left(q\eta , \xi\right)_{\left(0,\alpha\right)}\right|\\
&&\quad~~~~~\leq c\left\|\eta\right\|_{1}\left\|\xi-x\right\|_{1}\\
&&\quad~~~~~\leq ch\left\|\eta\right\|_{1}\\
&&\quad~~~~~\leq ch^{2}\left\|u\right\|_{2}.
\end{eqnarray*}
Finally, we get
\begin{eqnarray*}
&&\left|\Gamma_{\alpha}^{-}-\beta_{1}u_{x}^{-}\left(\alpha\right)\right|\leq c\left|\left(u_{h}-Y\right)\left(\alpha\right)\right|\\
&&~~~~\quad\qquad\quad~~~~\leq c\left\{\left|\left(u-u_{h}\right)\left(\alpha\right)\right|+\left|\left(u-Y\right)\left(\alpha\right)\right|\right\}\\
&&~~~~\quad\qquad\quad~~~~\leq ch^{2}\left\|u\right\|_{2}.
\end{eqnarray*}
This completes the proof of the theorem.
\end{proof}

\subsubsection*{Approximation of flux from the right side of the interface}

In the similar way, we can get the second order accurate flux, $-\beta^+ u_x^+ $, from the right side of the interface
\begin{equation*}
 \Gamma_{\alpha}^{+}\triangleq\frac{1}{1-\alpha}\{(\beta u_{h}^{'}, -1)_{(\alpha,1)}+(qu_{h}-f, 1-x)_{(\alpha,1)}\}.
\end{equation*}
We also have the following error bound.
\begin{equation*}\label{}
  \left|\Gamma_{\alpha}^{+}+\beta^+ u_{x}^{+}\left(\alpha\right)\right|\leq ch^{2}\left\|u\right\|_{2}.
\end{equation*}

\subsubsection*{Approximation of fluxes at the boundary}

The approach for accurate flux computations at the interface
can be applied to the flux computation from the left and right boundaries as expressed below.
We define approximations $\Gamma_{0}$ and $\Gamma_{1}$ to the fluxes,
$\beta_{1}u^{'}\left(0\right)$ and $\beta_{2}u^{'}\left(1\right)$ respectively:
\begin{equation*}\label{}
\Gamma_{0} \triangleq\left(\beta u_{h}^{'}, -1\right)+\left(qu_{h}-f, 1-x\right),
\end{equation*}
\begin{equation*}\label{}
\Gamma_{1}\triangleq\left(\beta u_{h}^{'}, 1\right)+\left(qu_{h}-f, x\right).
\end{equation*}
Then $\Gamma_{0}$ and $\Gamma_{1}$ are second order approximations to the flux
from the left and right boundaries as stated in the following theorem.
We skip the proof since it is similar to that for the flux from the left side of the interface.
\begin{theorem}{}
If $u\left(x\right)$ is the solution of $\left(\ref{sol ch2}\right)$,
$u_{h}\left(x\right)$ is the Galerkin approximation of the solution of $u\left(x\right)$,
$\Gamma_{0}$ and $\Gamma_{1}$ are as defined above,
then
\begin{equation*}\label{}
\left|\Gamma_{0}+\beta_{1}u^{'}\left(0\right)\right|+\left|\Gamma_{1}-\beta_{2}u^{'}\left(1\right)\right|\leq c h^{2}\left\|u\right\|_{2}.
\end{equation*}
\end{theorem}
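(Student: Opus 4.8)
The plan is to reduce both boundary-flux errors to exactly the quantities that were already controlled in the proof for $\Gamma_\alpha^-$, by first deriving exact integration-by-parts identities and then invoking convergence estimates that are available from the earlier analysis. Throughout I write $e=u_h-u$ for the Galerkin error and use the weights $v=x$ for $\Gamma_1$ and $v=1-x$ for $\Gamma_0$, which are precisely the weights appearing in the definitions of $\Gamma_1$ and $\Gamma_0$.

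First I would establish the two exact identities
\[
\Gamma_1-\beta_2 u'(1)=a(e,x), \qquad \Gamma_0+\beta_1 u'(0)=a(e,1-x).
\]
To obtain the first, I multiply the strong equation $-(\beta u')'+qu=f$ by $x$, integrate over $(0,\alpha)$ and $(\alpha,1)$ separately, and integrate by parts on each piece. The two boundary contributions generated at $x=\alpha$ combine into $\alpha\,[\beta u']_\alpha$, which vanishes by the flux continuity $(\ref{jumpcond ch2})$, so the only surviving endpoint term is $-\beta_2 u'(1)$. This gives $\beta_2 u'(1)=(\beta u',1)+(qu-f,x)$, and subtracting it from the definition of $\Gamma_1$ yields $\Gamma_1-\beta_2 u'(1)=(\beta e',1)+(qe,x)=a(e,x)$. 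The identity for $\Gamma_0$ is obtained identically with the weight $1-x$, the jump term now being $(1-\alpha)[\beta u']_\alpha=0$.

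Next I would extract the extra power of $h$. Splitting $\int_0^1\beta e'\,dx$ at the interface and using $\beta=\beta_1$ on $(0,\alpha)$, $\beta=\beta_2$ on $(\alpha,1)$, together with $e(0)=e(1)=0$ and the continuity of $e$ at $\alpha$, I get $\int_0^1\beta e'\,dx=(\beta_1-\beta_2)\,e(\alpha)$, so that
\[
\Gamma_1-\beta_2 u'(1)=(\beta_1-\beta_2)\,e(\alpha)+(qe,x)_{(0,1)},
\]
and analogously $\Gamma_0+\beta_1 u'(0)=(\beta_2-\beta_1)\,e(\alpha)+(qe,1-x)_{(0,1)}$. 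Hence
\[
|\Gamma_1-\beta_2 u'(1)|+|\Gamma_0+\beta_1 u'(0)|\le c\left(|e(\alpha)|+\|e\|_0\right).
\]
It then remains to bound the pointwise value $|e(\alpha)|=|(u_h-u)(\alpha)|$ and the $L^2$ error $\|e\|_0$. The latter is the Aubin--Nitsche estimate $\|u_h-u\|_0\le ch^2\|u\|_2$, which follows from $(\ref{error ch2})$ via C\'ea's lemma and $H^2$-regularity of the dual problem; the former is exactly the superconvergent interface-point estimate $|(u_h-u)(\alpha)|\le ch^2\|u\|_2$ established at the end of the proof for $\Gamma_\alpha^-$. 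Combining them gives the claimed $O(h^2)$ bound.

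The main obstacle is the same one as in the interface case: the naive bound $|a(e,x)|\le c\|e\|_1\,\|x\|_1$ only yields $O(h)$, because $x\notin V_h$ (it does not vanish at the recovery endpoint) and its best $H^1$ approximation in $V_h$ carries an $O(h^{-1/2})$ boundary layer in the last element. The content is therefore to convert $a(e,x)$ into point values plus the $L^2$ error and to prove that these superconverge. If one prefers to follow the $\Gamma_\alpha^-$ template literally rather than quote the point estimate, one introduces the auxiliary $Y$ defined by $(\beta Y',v_h')_{(0,1)}+(qY-f,v_h)_{(0,1)}=\beta_2 u'(1)\,v_h(1)$ over test functions $v_h$ vanishing only at $x=0$, reduces the error to $\|u_h-Y\|_0$, $|(u_h-Y)(1)|$ and $|(u_h-Y)(\alpha)|$, and closes with the two duality arguments used there: a homogeneous adjoint problem to trade $\|w\|_0$ against $h\|w\|_1$, and an adjoint problem with a unit Neumann datum to bound the point values. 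That duality step, which is what produces the gain from $h$ to $h^2$, is the crux.
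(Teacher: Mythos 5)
Your proposal is correct, but it follows a genuinely different route from the one the paper intends. The paper omits this proof with the remark that it is ``similar to that for the flux from the left side of the interface,'' i.e.\ one is meant to rerun the Wheeler machinery on $(0,1)$: introduce the auxiliary $Y\in V_h$ with $Y(0)=0$ satisfying the discrete problem with the exact flux datum $\beta_2 u'(1)v_h(1)$ on the right, reduce the error to $\|u_h-Y\|_0$ and point values, and close with the two duality problems ($L^*\varphi=-w$ and the homogeneous adjoint problem with unit conormal datum). You instead derive the exact representation $\Gamma_1-\beta_2 u'(1)=(\beta e',1)+(qe,x)$ (and its mirror for $\Gamma_0$) directly from the strong form, the flux jump condition (\ref{jumpcond ch2}) killing the interface boundary terms, and then observe that because the weights $x$ and $1-x$ have constant derivative and $e=u_h-u$ vanishes at both endpoints and is continuous at $\alpha$, the leading term telescopes to $(\beta_1-\beta_2)e(\alpha)$. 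This collapses the whole theorem to the two estimates $\|e\|_0\le ch^2\|u\|_2$ and $|e(\alpha)|\le ch^2\|u\|_2$, and it makes transparent exactly where the extra power of $h$ comes from, which the $Y$-based argument obscures. What the paper's template buys in exchange is uniformity: the same proof skeleton covers all four flux recoveries without ever needing a separate $L^2$ error bound for $u_h$. One caveat on completeness: the two estimates you quote are not actually proved in the paper. The Aubin--Nitsche bound $\|u-u_h\|_0\le ch^2\|u\|_2$ follows from Theorem~2.2 by the standard duality argument (the adjoint problem has the same $\widetilde H^2$ regularity), and the interface-point bound $|e(\alpha)|\le ch^2\|u\|_2$ requires a Green's-function duality of exactly the type used for $|(u-Y)(\alpha)|$ in the proof for $\Gamma_\alpha^-$; the paper itself asserts $|(u-u_h)(\alpha)|\le ch^2\|u\|_2$ in its final chain of inequalities without proof, so you are on equal footing, but a self-contained write-up should include those two short duality arguments.
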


\subsection{Numerical experiments in 1D}

We present one example below that is taken from \cite{li2006immersed}.
The exact solution is
\begin{equation*}
  u(x)=
  \begin{cases}
    x^4/\beta^-, \qquad \quad  \textrm{if}~~ 0<x < \alpha,\\
    x^4/\beta^+ +(1/\beta^--1/\beta^+)\alpha^4, \; \;\textrm{if}~~ \; \alpha < x <1,
 \end{cases}
\end{equation*}
where $0<\alpha<1$ is an interface.
The solution satisfies the ODE $-(\beta u')'=f(x)$ where $f(x)=-12 x^2$.
In this example, $q(x)=0$ and $[u]=0$ and $[\beta u']=0$, but $[u']\neq~0$.

In Table~\ref{result1D ch2},
we show a grid refinement analysis for the proposed method with $\alpha=1/3$, $\beta^-=2$, $\beta^+=10$.
Thus the interface $\alpha$ is not a nodal point.
We measure the  error for the solution $u(x)$is the entire domain $(0,1)$
in the second column using the strongest norm $L^{\infty}$ norm.
We estimate the convergence order using $p = \log (E_n/E_{2N})/\log 2$ in the third column.
As usual, since the relative location between the underlying grid and the interface $\alpha$ is not fixed,
the convergence order fluctuates.
The average convergence order is $1.983$.
In the third column, we list the grid refinement analysis for $u_x^-=\lim_{x\rightarrow \alpha, x<\alpha} u'(x)$,
that is, the first order derivative from the left side of the interface,
we observe clear second order convergence as shown in the fifth column.

\begin{table}[H]
\centering
\begin{tabular}{|c|c|c|c|c|}
\hline
  N & $\|u-u_h\|_{L^{\infty}}$ &  Order   & $|u-u_h|_{H^1}$  & Order  \\
\hline
16  & 3.395E-05  &       & 3.870E-03 &          \\
\hline
32  & 1.547E-05  & 1.134 & 7.980E-04 & 2.278     \\
\hline
64  & 2.191E-06  & 2.820 & 1.562E-04 & 2.353     \\
\hline
128 & 9.732E-07  & 1.171 & 3.892E-05 & 2.005     \\
\hline
256 & 1.413E-07  & 2.784 & 8.475E-06 & 2.199     \\
\hline
512 & 6.088E-08  & 1.215 & 2.263E-06 & 1.905     \\
\hline
1024& 8.900E-09  & 2.774 & 5.098E-07 & 2.150     \\
\hline
\end{tabular}
\caption{ A grid refinement analysis  of the proposed method with $\alpha=1/3$, $\beta^-=2$, $\beta^+=10$. The second column is the $L^{\infty}$ error of the solution in the entire domain $(0,1)$. The fourth column is the error in the first order derivative $u_x^-$, that is, from the left side of the interface. The average convergence order for the solution and $u_x^-$ are $1.983$ and $2.148$, respectively.}  \label{result1D ch2}
\end{table}

\section{The numerical method and experiments for the 2D interface problem}

The results in the previous section are the optimal since both the solution and the flux (at the interface or boundaries)
using a piecewise linear finite element space.
However, it is still an open question how to apply the approach to 2D problems with a curved interface.
In this section, we provide an alternative approach that is similar to a mixed finite element method
but only in a small tube around the interface.

In this section, the elliptic interface problem is
\begin{equation}\label{2Dpde ch2}
-\nabla \cdot \left(\beta \nabla u(\mathbf{x}) \right) +  q(\mathbf{x}) u(\mathbf{x})  = f(\mathbf{x}), \quad \mathbf{x}\in \Omega_i, \quad \mbox{$i=1,2$},
\end{equation}
where $q(\mathbf{x})\in L^{\infty}(\Omega) \ge 0$, $\Omega=\Omega^-\cup \Omega^+$,
$\beta(\mathbf{x})$ is a piecewise positive constant as in (\ref{beta ch2})
and has a finite jump discontinuity across a closed interface $\Gamma\in C^1$ in the solution domain.

In our new method, we introduce a tube  that contains the interface with a diameter $2 \epsilon$ as
\begin{equation*}
  \Omega_{\epsilon} = \left \{ \mathbf{x}\in \Omega_1 \cup \Omega_2, \quad d(\mathbf{x},\Gamma ) \le \epsilon \right\},
\end{equation*}
where $d(\mathbf{x},\Gamma )$ is the distance between $\mathbf{x}$ and the interface~$\Gamma$.
In the tube~$\Omega_{\epsilon}$,
we introduce the flux as a separate variable vector $\mathbf{v}$
that can be considered as an augmented variable.
Thus, in addition to the PDE (\ref{2Dpde ch2}) in the entire domain,
we also have the following equations

\begin{equation*}
\begin{array}{c}
   -\beta \, \nabla u = \mathbf{v},    \\
    \nabla \cdot \mathbf{v} + q u = f ,
 \end{array}
\qquad \mathbf{x} \in \Omega_{\epsilon}.
\end{equation*}

Next we  define the following functional spaces
\begin{eqnarray*}
H_0^1 &=& \left\{\phi \in H^1(\Omega), \quad \phi = 0 \text{ on } \partial\Omega  \right\},\\
W & =& \left\{ w\in L^2(\Omega_{\epsilon}) \right\}, \\
L_g &=& \left \{\mathbf{g} \in (L^2(\Omega_{\epsilon}))^2 \right\},
\end{eqnarray*}
assuming homogenous boundary condition along $\partial\Omega_2$.

We can easily get the following weak form for $u$ (in the entire domain) and $\mathbf{g}$ (in $\Omega_{\epsilon}$) below
\begin{eqnarray}
 &   \left(\beta_i \nabla u, \, \nabla \phi \right) + \left(q u, \phi\right) = \left(f, \phi\right)  \text{ in } \Omega_i, \;  i=1, 2, \\
 &  -\left(\beta_i \nabla u, \, \mathbf{g}\right) = \left(\mathbf{v}, \mathbf{g}\right) \;  \text{in} \; \Omega_{\epsilon}\cap \Omega_{i}, \; i=1,2, \label{poi2}\\
 &  \left(\nabla \cdot \mathbf{v},\, w\right) + (q u, w) = \left(f, w \right) \quad \text{ in } \Omega_{\epsilon}, \label{poi3}
\end{eqnarray}
where the inner product is in the regular $L^2$ sense and those quantities  $\phi$, $\mathbf{g}$, and $w$ are from the spaces defined above.

 There are two intuitive reasonings behind the new algorithm.
 We know that the mixed formulation would improve the gradient computation.
 If we are only interested in the gradient from each side of the interface,
 then we just need to use a small tube for the computation.
 The second consideration is that if we set the flux
 $\mathbf{v}=\beta \nabla u$ along the interface
 as an unknown in addition to the solution $u$,
 and then discretize the whole system with high order discretization (second order in the manuscript),
 then we would expect the error for the unknown flux $\mathbf{v}$ would have the same order of accuracy as for the discretization.
In discretization, we use piecewise linear functions for $\phi$ and $\mathbf{g}$ as usual,
and piecewise constant functions for $w$.
The new augmented method enlarged the system by (\ref{poi2}) and (\ref{poi3}).
In terms of the stiffness matrix,
an additional $n_v$ number of columns are added to the stiffness matrix
where $n_v$ is the number of extra unknowns  $\mathbf{v}$ in the tube $\Omega_{\epsilon}$.
As a result, the stiffness matrix becomes a rectangular matrix instead of a square matrix.
We used the singular value decomposition (SVD) to solve the resultant rectangular system.
Since $\mathbf{v}$ has co-dimension one compared with that of $u$,
the additional extra cost is negligible compared with that of the elliptic solver on the entire domain.

\subsection{Numerical experiments in 2D}

Let $\Omega_2$ be a unit circle centered at $(0,0)$ with radius $R=1$.
In our numerical test, we take $q=0$.
Let $\Gamma$ be an interface inside the unit circle with radius $R=0.9$.
The tube width is taken as $\epsilon=3h$, that is three layers from each side of the interface.
The  exact solution is
\begin{eqnarray}
 u(x,y)  = \sin x \cos y,
\end{eqnarray}
in the entire domain so that the solution is continuous,
but the flux $\beta \nabla u$ is discontinuous for the test problem.
The coefficient is taken as $\beta_1=100$ and $\beta_2=1$.
The source terms and boundary condition are determined accordingly.
In Table $\ref{table41 ch2}$,
the $L^2$ norm errors of $u$, $\mathbf{v}$,
and the $H^1$ norm error of $u$ are reported.
The $L^2(\Omega)$, $H^1(\Omega)$ are used for the solution $u$,
that is, in the entire domain, while $L^2(\Gamma)$ is used for the flux along the interface.

The first column $N$ is the mesh lines in the coordinates directions.
The results indicate that the new augmented method worked as expected.
The convergence rate is shown in Table $\ref{table42 ch2}$.

\begin{table}[H]
\centering
\begin{tabular}{|c|c|c|c|}
\hline
 $N$  & $L^2$ error of ${u}$ & $H^1$ error of ${u}$ & $L^2$ error of $\mathbf{v}$ \\
\hline
8 & 9.96e-3 & 5.67e-1 & 5.67e-1 \\
\hline
16 & 2.48e-3 & 1.75e-1 & 1.75e-1 \\
\hline
32 & 7.77e-4 & 4.14e-2 & 4.14e-2 \\
\hline
64 & 1.81e-4 & 1.04e-2 & 1.04e-2 \\
\hline
128 & 4.71e-5 & 5.95e-3 & 5.95e-3 \\
\hline
256 & 1.15e-5 &1.54e-3 & 1.54e-3 \\
\hline
512 & 2.92e-6 &3.80e-4 & 3.80e-4 \\
\hline
\end{tabular}
\caption{A grid refinement analysis of the proposed method.  The $H^1$ norm of $u$ is the same as the $L^2$ error of $\mathbf{v}$ because the error in the gradient is dominated compared with that of $u$.} \label{table41 ch2}
\end{table}

In Table $\ref{table42 ch2}$,
a comparison of the convergence order between the standard finite element method and the new augmented approach is presented.
We observe that the new approach provides much better accuracy for the flux (gradient).
Now we have super-convergence for the gradient.
Super-convergence here is the result of the convergence that is faster than the original method.
For the finite element method with the piecewise linear function space,
it is well-known that the flux is first order accurate in the $L^2$ norm.
In our manuscript, We proposed a method to reconstruct the flux at the interface
and has shown that the reconstructed flux  has second order convergence.
\begin{table}[H]
\centering
\begin{tabular}{|c|c|c|c|c|}
\hline
Quantity &$u$  & $u$ &$\mathbf{v}$ \\
\hline
 Norm & $L^2$ & $H^1$ &$L^2$ \\
\hline
Order (standard FEM) & 1.98 & 1.03 & 1.03\\
\hline
Order (new  method)  & 1.94 & 1.72 & 1.72\\
\hline
\end{tabular}
\caption{A comparison of the convergence order between the standard FEM and the new augmented approach.\label{table42 ch2}}
\end{table}

Next, in Table~\ref{table43 ch2} we present the results with different interface locations
including the case that covers the entire domain ($r_i=0$)
so that we get the gradient in the entire domain as well,
where $r_i$ is the radius of the interface.
Of course, the computational cost also increased.
As we expected, we have second order convergence in the $L^2$ norm for the solution,
and roughly $1.70$ order for the flux (gradient).
In this case, the accuracy of the gradient  is improved by about $70$ percent.

\begin{table}[H]
\centering
\begin{tabular}{|c|c|c|c|c|}
\hline
$r_i$ &Order in $L^2$ of $u$  & order in $H^1$  \\
\hline
 $0.9$  & 1.94 & 1.72\\
\hline
 $0.99$  & 1.94 & 1.70\\
\hline
 0  & 1.93 & 1.70\\
\hline
\end{tabular}
\caption{A comparison of the convergence order for various location of the interface, where $r_i$ is the radius of the interface.\label{table43 ch2}}
\end{table}

Now we show the result with a non-zero $q(x,y)$.
We use the same solution above with $q(x,y)=1$.
The source term is modified accordingly.
In Table~\ref{table41_new_q},
we show the grid refinement analysis of the error in $L^2$ and $H^1$ norm.
We observe the same behavior with the similar convergence orders.
The average convergence orders are $1.92$ for the $L^2$ norm, $1.71$ for the $H^1$ norm, respectively.
Note that, the $L^2$ norm of the error in $\mathbf{v}$ is the same as the $H^1$ norm as explained earlier.

\begin{table}[H]
\centering
\begin{tabular}{|c|c|c|}
\hline
 $N$  & $L^2$ error of ${u}$ & $H^1$ error of ${u}$ \\
\hline
8 & 9.96e-3 & 5.67e-1  \\
\hline
16 & 3.51e-3 & 2.61e-1  \\
\hline
32 & 1.17e-4 & 6.54e-2  \\
\hline
64 & 2.81e-4 & 1.66e-2  \\
\hline
128 & 7.24e-5 & 9.13e-3  \\
\hline
256 & 1.85e-5 &2.31e-3  \\
\hline
512 & 4.62e-6 &5.79e-4  \\
\hline
\end{tabular}
\caption{A grid refinement analysis of the proposed method when $q(x,y)=1$. The average convergence orders are $1.92$ for the $L^2$ norm, $1.71$ for the $H^1$ norm, respectively.} \label{table41_new_q}
\end{table}

In the previous example, the solution is the same in the entire domain in spite of the flux is discontinuous.
Below we present another example in which the solution is different in different domain.
The outer boundary is $R=2$.
\begin{equation}\label{}
u(x,y,t) =
\begin{cases}
 (x^2 + y^2)^2   & \mbox{if $ r > 1, $}\\
 (x^2 + y^2)     & \mbox{if $ r \leq 1, $}
 \end{cases}
\end{equation}
where $r=\sqrt{x^2 + y^2}$.
The source term $f(x,y)$,
and the Dirichlet boundary condition are  determined from the true solution.
In this example, the solution is continuous, that is, $[u]=0$, but the flux jump is non-homogeneous.

We tested our new method with large jump ratios $\beta_2 : \beta_1=1000:1$ and $\beta_2 : \beta_1=1:1000$.
In Table~\ref{table53 ch2},
we present the results with different widths of the tube
including the case that covers the entire domain
so that we get the gradient in the entire domain as well.
As we expected,
we have second order convergence in the $L^2$ norm for the solution,
and roughly $1.54$ order for the flux (gradient) as in the thin tube case.
Compared with the standard finite element method, the accuracy of the computed gradient is improved by more than $50$ percent.
Note that the results are almost the same as the new gradient recovery technique using a posterior approach \cite{guo-yang16}
in which the rate of the recovered gradient is around $1.5$.
Note that, the convergence order for the gradient is about $1.54$
which is lower than the previous case possibly due to the non-homogenous flux jump.

\begin{table}[H]
\centering
\begin{tabular}{|c|c|c|c|c|}
\hline
width ($\epsilon$) &Order in $L^2$ of $u$  & order in $H^1$  \\
\hline
 $3h$  & 1.96 & 1.53\\
\hline
 $10h$  & 1.96 & 1.56\\
\hline
 2 (entire domain) & 1.96 & 1.56\\
\hline
\end{tabular}
\caption{A comparison of the convergence order for various widths  of the tube.\label{table53 ch2}}
\end{table}

\section{Conclusions}

In this paper, we discussed two methods to enhance the accuracy of the computed flux
at the interface for the elliptic interface problem.
One is for one-dimensional problems
in which we use a simple weak form to get second order accurate fluxes at the interface from each side.
We also have rigorous analysis for the approach.
The other one is an augmented approach for two dimensional interface problems.
Numerical examples show that we get better than super-convergence (about $1.50\sim 1.70$ order)
for the computed fluxes at the interface from each side.
For the two dimensional algorithm, the theoretical analysis is still an open challenge.

\section*{Acknowledgment}
The authors would like to thank the following supports.
F. Qin is partially supported by China NSF Grants 11691209, 11691210
and the NSF of Jiangsu Province, China (Grant No. BK20160880).
Z. Ma is supported by China National Special Fund: 2012DFA60830,
and
Z. Li would is partially supported by the US NSF grant DMS-1522768, and China NSF grant 11371199 and 11471166.

\end{document}